\newtheorem{theo}{Theorem}
\newtheorem{prop}[theo]{Proposition}
\theoremstyle{definition}
\newtheorem{defi}[theo]{Definition}
\newtheorem{exam}[theo]{Example}
\newcommand{\esg}[2]{#1\langle#2\rangle} 
\DeclarePairedDelimiter\abs{\lvert}{\rvert}
\title{A note on a Broken-cycle Theorem \\ for hypergraphs}
\author{Martin Trinks\thanks{Supported by European Social Fond grant 080940498.} \\
\small Faculty Mathematics / Sciences / Computer Science \\[-0.8ex]
\small Hochschule Mittweida --- University of Applied Sciences \\[-0.8ex]
\small Technikumplatz 17, 09648 Mittweida, Germany \\
\small \tt trinks@hs-mittweida.de}
\date{\today}
\begin{document}

\maketitle


\begin{abstract}
Whitney's Broken-cycle Theorem states the chromatic polynomial of a graph as a sum over special edge subsets. We give a definition of cycles in hypergraphs that preserves the statement of the theorem there.
\end{abstract}

\section{Introduction}

The well-known Broken-cycle Theorem, originally given by Whitney \cite{whitney1932,whitney1931}, states the chromatic polynomial of a graph as a sum over edge subsets not including any broken cycles, where a broken cycle arises from the deletion of the maximal edge (with respect to a given order on the edge set) of a cycle.

While there are some definitions of cycles in hypergraphs \cite{jegou2009}, including the most common one due to Berge \cite[Section 5.1]{berge1989}, none of these definitions admits a straightforward generalization of broken cycles in hypergraphs such that the Broken-cycle Theorem is valid in this more general case.

We give a novel definition of cycles in hypergraphs that preserves the statement of the Broken-cycle Theorem. Therein cycles are minimal subgraphs such that the deletion of an edge does not increase the number of connected components. 

Furthermore, we extend the result to graph polynomials similar to the chromatic polynomial and to regarding a subset of the broken cycles. Both generalizations are already used in the case of graphs \cite[Subsection 3.2.1]{trinks2012c}.

\begin{defi}
\label{defi:hypergraph}
A \emph{hypergraph} $G = (V, E)$ is an ordered pair of a finite set of vertices, the vertex set $V$, and a finite multiset of (hyper)edges, the edge set $E$, such that each edge is a non-empty subset of the vertex set, i.e.\ $e \subseteq V$ for all $e \in E$.
\end{defi}

Consequently, a graph is a hypergraph $G = (V, E)$, where each edge is a set of at most two vertices: $\abs{e}\leq 2$ for all $e \in E$.

For a hypergraph $G = (V, E)$ we use the standard notations known from graphs, in particular the following ones: A hypergraph $G' = (V', E')$ is a \emph{subgraph} of $G$, if $V' \subseteq V$ and $E' \subseteq E$. A hypergraph $\esg{G}{A} = (V, A)$ for an edge subset $A \subseteq E$ is a \emph{spanning subgraph}. Furthermore, we denote by $k(G)$ the number of connected components and by $G_{-e}$ the graph arising from $G$ by deleting $e$.

\todo[inline]{Erklärung warum $\delta$?}

\begin{defi}
\label{defi:cyclic_hypergraph} Let $G = (V, E)$ be a hypergraph. $G$ is \emph{$\delta$-cyclic}, if it has a subgraph $G' = (V', E')$ including at least one edge such that for each edge $e \in E'$ it holds
\begin{align}
k(G') = k(G'_{-e}).
\end{align}
\end{defi}

\begin{defi}
\label{defi:cycle}
Let $G = (V, E)$ be a hypergraph. $G$ is a \emph{$\delta$-cycle}, if it is $\delta$-cyclic and has no proper $\delta$-cyclic subgraph.
\end{defi}

Therefore, in the case of graphs the definitions of $\delta$-cycles equals the usual definition of cycles (regarding a single loop and parallel edges also as cycles). 

\begin{exam}
Consider the hypergraph $G = (V, E)$ with $V = \{1, 2, 3, 4, 5\}$ and 
$E = \{ \{1, 3\}, \{1, 2, 3\}, \{1, 4, 5\}, \{3, 4, 5\} \}$. $G$ is $\delta$-cyclic but not a $\delta$-cycle, because the deletion of edge $\{1, 2, 3\}$ renders vertex $2$ isolated. The subgraph arising from deleting the edge $\{1, 2, 3\}$ and the vertex $2$ is the only $\delta$-cycle of $G$. $G$ itself is a cycle due to the definition of Berge \cite[Section 5.1]{berge1989}.
\end{exam}

\todo[inline]{Delete the last sentence?}

We consider hypergraphs $G = (V, E)$ with a linear order $<$ on the edge set $E$. This linear order can be represented by a bijection $\beta \colon E \rightarrow \{1, \ldots, \abs{E}\}$ for all $e, f \in E$ with
\begin{align}
e < f \Leftrightarrow \beta(e) < \beta(f).
\end{align}

\begin{defi}
\label{defi:broken_cycle}
Let $G = (V, E)$ be a hypergraph with a linear order $<$ on the edge set $E$. Let $C = (V_C, E_C) \subseteq G$ be a $\delta$-cycle and $e \in E_C$ the maximal edge of $C$ with respect to $<$. Then $E_C \setminus \{e\}$ is a \emph{broken cycle} in $G$ with respect to $<$. The \emph{set of all broken cycles} of $G$ with respect to $<$ is denoted by $\mathcal{B}(G, <)$.
\end{defi}

\begin{defi}
Let $G = (V, E)$ be a hypergraph. A \emph{$k$-coloring} of $G$ is a function $\phi \colon V \rightarrow \{1, \ldots, k\}$. A $k$-coloring is \emph{proper}, if for each edge not all vertices are mapped to the same element, i.e.
\begin{align}
\nexists e \in E \,  \exists c \in \{1, \ldots, k\} \, \forall v \in e \colon \phi(v) = c.
\end{align} 
\end{defi}

\begin{defi}
Let $G = (V, E)$ be a hypergraph. The \emph{chromatic polynomial} $\chi(G, x)$ equals (for $x \in \mathds{N}$) the number of proper $x$-colorings.
\end{defi}

The chromatic polynomial of a hypergraph satisfies the same edge subset expansion that is valid in the case of graphs \cites[Theorem 2.21]{dong2005}[Section 2]{whitney1931}.

\todo[inline]{``Ausführlicher'', Referenz? Beweis? "To show this, the same proofs can be applied."?}

\begin{prop}[Proposition 1.1 in \cite{dohmen1995}]
Let $G = (V, E)$ be a hypergraph. The chromatic polynomial $\chi(G, x)$ satisfies
\begin{align}
\chi(G, x) = \sum_{A \subseteq E}{(-1)^{\abs{A}} x^{k(\esg{G}{A})}}. \label{eq:prop_cp_expansion}
\end{align}
\end{prop}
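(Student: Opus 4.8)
The plan is to prove the expansion by inclusion--exclusion over the edges that are rendered monochromatic. First I would fix $x \in \mathds{N}$ and, for each edge $e \in E$, introduce the set $M_e$ of $x$-colorings $\phi \colon V \to \{1, \ldots, x\}$ under which $e$ is \emph{monochromatic}, i.e.\ all vertices of $e$ receive the same color. By the definition of a proper coloring, a coloring is proper exactly when it lies in none of the sets $M_e$; hence $\chi(G, x)$ counts the colorings in the complement of $\bigcup_{e \in E} M_e$. The inclusion--exclusion principle then gives
\begin{align*}
\chi(G, x) = \sum_{A \subseteq E} (-1)^{\abs{A}} \Bigl\lvert \bigcap_{e \in A} M_e \Bigr\rvert,
\end{align*}
with the convention that the empty intersection is the full set of $x^{\abs{V}}$ colorings.

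The key step is to evaluate $\bigl\lvert \bigcap_{e \in A} M_e \bigr\rvert$ for fixed $A \subseteq E$. A coloring lies in this intersection precisely when every edge of $A$ is monochromatic under it. I would argue that this condition is equivalent to requiring $\phi$ to be constant on each connected component of the spanning subgraph $\esg{G}{A} = (V, A)$: if two vertices lie in a common edge of $A$ they must share a color, and taking the transitive closure of this relation along paths in $\esg{G}{A}$ forces $\phi$ to be constant on each component; conversely, any coloring that is constant on components clearly makes every edge of $A$ monochromatic. Since the components may be colored independently and each of the $k(\esg{G}{A})$ components admits $x$ choices, we obtain $\bigl\lvert \bigcap_{e \in A} M_e \bigr\rvert = x^{k(\esg{G}{A})}$. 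Substituting this into the inclusion--exclusion sum yields \eqref{eq:prop_cp_expansion} for every $x \in \mathds{N}$, which is exactly the claim; since the right-hand side is manifestly a polynomial in $x$ agreeing with $\chi(G, x)$ on all of $\mathds{N}$, the identity also holds at the level of polynomials.

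The main obstacle, and the only point that genuinely departs from the graph case, is the equivalence asserted above: a monochromatic hyperedge constrains all of its (possibly more than two) vertices simultaneously, so I would be careful to phrase the connectivity of $\esg{G}{A}$ as precisely the transitive closure of the relation ``sharing an edge of $A$''. Once this is established the counting argument carries over verbatim from graphs. I would additionally check that isolated vertices of $\esg{G}{A}$ (those contained in no edge of $A$) count as singleton components, so that the extreme case $A = \emptyset$ gives $k(\esg{G}{\emptyset}) = \abs{V}$ and hence $x^{\abs{V}}$ colorings, matching the convention fixed for the empty intersection.
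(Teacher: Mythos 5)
Your proposal is correct. Note that the paper does not prove this proposition at all --- it is quoted as Proposition 1.1 of \cite{dohmen1995} --- but your argument is precisely the standard proof behind that citation: inclusion--exclusion over the sets $M_e$ of colorings making the edge $e$ monochromatic, combined with the observation that the colorings making every edge of $A$ monochromatic are exactly those constant on each of the $k(\esg{G}{A})$ connected components of $\esg{G}{A}$, and you correctly handle the hypergraph-specific points (hyperedges constraining more than two vertices, isolated vertices as singleton components, and polynomiality following from agreement on all of $\mathds{N}$).
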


\section{A Broken-cycle Theorem for hypergraphs}

\begin{theo}
\label{theo:broken-cycle_theorem}
Let $G = (V, E)$ be a hypergraph with a linear order $<$ on the edge set $E$. The chromatic polynomial $\chi(G, x)$ satisfies
\begin{align}
\chi(G, x)
&= \sum_{\substack{A \subseteq E \\ \forall B \in \mathcal{B}(G, <) \colon B \nsubseteq A}}{(-1)^{\abs{A}} x^{k(\esg{G}{A})}}.
\end{align}
\end{theo}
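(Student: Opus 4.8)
The plan is to start from the Whitney-type expansion~\eqref{eq:prop_cp_expansion} and to show that every edge subset $A$ which \emph{does} contain a broken cycle contributes nothing, because such subsets cancel in pairs. Concretely, I would construct a sign-reversing, connectivity-preserving involution $\Phi$ on the family $\mathcal{A}^{\ast} = \{A \subseteq E : \exists B \in \mathcal{B}(G,<),\ B \subseteq A\}$ of ``bad'' subsets. If $\Phi$ toggles exactly one edge, so that $\abs{\Phi(A)} = \abs{A} \pm 1$ and hence the signs $(-1)^{\abs{A}}$ are opposite, while leaving $k(\esg{G}{A})$ unchanged, then the contributions of $A$ and $\Phi(A)$ cancel, the total sum over $\mathcal{A}^{\ast}$ vanishes, and what remains is precisely the sum over broken-cycle-free subsets claimed in the theorem.

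To define $\Phi$, I would call an edge $e$ \emph{decisive} for $A$ if there is a $\delta$-cycle $C \subseteq G$ whose maximal edge with respect to $<$ is $e$ and whose broken cycle satisfies $E_C \setminus \{e\} \subseteq A$. By Definition~\ref{defi:broken_cycle}, $A$ contains a broken cycle if and only if it has at least one decisive edge, so on $\mathcal{A}^{\ast}$ the smallest decisive edge $e^{\ast}(A)$ (with respect to $<$) is well defined. I would then set $\Phi(A) = A \mathbin{\triangle} \{e^{\ast}(A)\}$, i.e.\ delete $e^{\ast}(A)$ from $A$ if present and add it otherwise.

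Two verifications remain. First, $\Phi$ must be an involution: toggling $e^{\ast}(A)$ does not touch the edges of $E_C \setminus \{e^{\ast}(A)\}$, so $e^{\ast}(A)$ stays decisive for $\Phi(A)$; and no strictly smaller edge can become decisive, since any $\delta$-cycle witnessing a decisive edge $f < e^{\ast}(A)$ would have to contain $e^{\ast}(A)$ as a non-maximal edge, forcing $e^{\ast}(A) \le f$, a contradiction. Hence $e^{\ast}(\Phi(A)) = e^{\ast}(A)$ and $\Phi(\Phi(A)) = A$. Second, and this is where the new notion of a $\delta$-cycle is essential, I must show $k(\esg{G}{A}) = k(\esg{G}{\Phi(A)})$. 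Writing $A_0 = A \setminus \{e^{\ast}\}$ and $A_1 = A \cup \{e^{\ast}\}$, the edges of $C$ other than $e^{\ast}$ all lie in $A_0$; since $C$ is a $\delta$-cycle, deleting $e^{\ast}$ leaves its number of components unchanged, $k(C) = k(C_{-e^{\ast}})$, so the vertices incident to $e^{\ast}$ are already joined inside $\esg{G}{A_0}$ through $C_{-e^{\ast}}$. Adding $e^{\ast}$ therefore merges no components, giving $k(\esg{G}{A_0}) = k(\esg{G}{A_1})$.

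I expect the main obstacle to be exactly this last step: transferring the local identity $k(C) = k(C_{-e^{\ast}})$ for the $\delta$-cycle $C$ to the global identity for $\esg{G}{A}$, and making the connectivity argument rigorous for hyperedges, where ``being joined'' means lying in a common connected component rather than being linked by a simple path. The combinatorial core --- choosing the smallest decisive edge and checking that this choice is stable under toggling --- is the classical Whitney argument and should carry over verbatim once the definitions are fixed; it is the $k$-invariance, built into Definition~\ref{defi:cyclic_hypergraph}, that makes the whole scheme work for hypergraphs.
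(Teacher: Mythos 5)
Your proposal is correct and follows essentially the same route as the paper: the paper likewise cancels the subsets containing a broken cycle in pairs $A \leftrightarrow A \cup \{e_i\}$, where $e_i$ is a canonically chosen edge closing a broken cycle contained in $A$ (organized there as a partition into blocks $E_i$ rather than an explicit involution), and it deduces $k(\esg{G}{A}) = k(\esg{G}{A \cup \{e_i\}})$ from the definition of $\delta$-cycles exactly as you do. The only difference is cosmetic: your canonical edge is the smallest edge that is maximal in some witnessing $\delta$-cycle, whereas the paper takes the smallest edge closing any broken cycle contained in $A$; both choices are stable under toggling, so both pairings yield the same cancellation.
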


\begin{proof}
Assume that $E = \{e_1, \ldots, e_{\abs{E}}\}$ such that $e_1 < \cdots < e_{\abs{E}}$. For each broken cycle $B \in \mathcal{B}(G, <)$ we denote by $e(B)$ the minimal edge closing the broken cycle $B$, i.e.\ 
\begin{align*}
e(B) = \min{\{ e \in E \mid B \cup \{e\} \text{ is the edge set of a } \delta\text{-cycle in } G\}}.
\end{align*}
We partition the edge subsets $A \subseteq E$ into blocks $E_i$ (some of them may be empty) as follows: $A \in E_0$ if $A$ does not include any broken cycles, and, otherwise, $A \in E_i$ if $e_i$ is the minimal edge closing a broken cycle included in	 $A$, i.e.\ $A \in E_i$ if $e_i = \min{\{ e(B) \mid B \in \mathcal{B}(G, <) \wedge B \subseteq A \}}$.

We claim that for each $i > 0$ and each $A \subseteq E$ with $e_i \notin A$ it holds
\begin{align*}
A \in E_i \Leftrightarrow A \cup \{e_i\} \in E_i.
\end{align*}

Proof of the first direction ($\Rightarrow$): We have $A \in E_i$ and assume that $A \cup \{e_i\} \in E_j$ with $i \neq j$, i.e.\ $e_j$ is the minimal edge closing a broken cycle in $A \cup \{e_i\}$. Because every broken cycle in $A$ is also a broken cycle in $A \cup \{e_i\}$, there is also a broken cycle closed by $e_i$ in $A \cup \{e_i\}$, and hence $e_j < e_i$. But there is no broken cycle closed by $e_j$ in $A$, otherwise $A \in E_j$, and therefore $e_i$ must be an edge in each broken cycle closed by $e_j$ in $A \cup \{e_i\}$. Consequently, as $e_j$ is greater than every edge of the broken cycle closed by it, $e_i < e_j$, which gives a contradiction. It follows $A \cup \{e_i\} \in E_i$.


Proof of the second direction ($\Leftarrow$): We have $A \cup \{e_i\} \in E_i$, i.e.\ $e_i$ is the minimal edge closing some broken cycle in $A \cup \{e_i\}$, and this broken cycle is also in $A$. Because every broken cycle in $A$ is also in $A \cup \{e_i\}$, $e_i$ is the minimal edge closing some broken cycle in $A$, consequently $A \in E_i$.


For such $A$ ($A \in E_i$ for $i > 0$) it follows that $e_i$ is an edge of a $\delta$-cycle in $\esg{G}{A \cup \{e_i\}}$ and from the definition of $\delta$-cycles it follows that $k(\esg{G}{A}) = k(\esg{G}{A \cup \{e_i\}})$.
Hence, for each block $E_i \neq E_0$ ($i > 0$) it holds
\begin{align*}
\sum_{A \in E_i}{(-1)^{\abs{A}} x^{k(\esg{G}{A})}} 
&= \sum_{\substack{A \in E_i \\ e_i \notin A}}{(-1)^{\abs{A}} x^{k(\esg{G}{A})}} + \sum_{\substack{A \in E_i \\ e_i \in A}}{(-1)^{\abs{A}} x^{k(\esg{G}{A})}} = 0.
\end{align*}
As $E_0$ is the set of edge subsets not including any broken cycle $B \in \mathcal{B}(G, <)$, we have $E_0 = \{A \subseteq E \mid \forall B \in \mathcal{B}(G, <) \colon B \nsubseteq A\}$ and the statement follows via the edge subset expansion of the chromatic polynomial given in Equation \eqref{eq:prop_cp_expansion}:
\begin{align*}
\chi(G, x)
&= \sum_{A \subseteq E}{(-1)^{\abs{A}} x^{k(\esg{G}{A})}} \\
&= \sum_{\substack{A \subseteq E \\ A \in E_0}}{(-1)^{\abs{A}} x^{k(\esg{G}{A})}} \\
&= \sum_{\substack{A \subseteq E \\ \forall B \in \mathcal{B}(G, <) \colon B \nsubseteq A}}{(-1)^{\abs{A}} x^{k(\esg{G}{A})}}. \qedhere
\end{align*}
\end{proof}

In the case of graphs, the term $k(\esg{G}{A})$ can be simplified to $\abs{V} - \abs{A}$ in broken-cycle-free spanning subgraphs. For hypergraphs this is not possible, because edges can connect a different number of vertices.

\todo[inline]{If every $\delta$-cycle is also a cycle due to Berge, than ...}

\section{A further generalization}

\begin{theo}
\label{theo:generalized_broken-cycle_theorem}
Let $G = (V, E)$ be a hypergraph with a linear order $<$ on the edge set $E$, $\mathcal{B} \subseteq \mathcal{B}(G, <)$ a subset of the set of broken cycles of $G$, and $f(G, A)$ a function to an additive abelian group such that for all $A \subseteq E$ and all $e \in E \setminus A$ it holds
\begin{align}
\label{eq:theo_gen_bct_cond}
k(\esg{G}{A}) = k(\esg{G}{A \cup \{e\}}) \Rightarrow f(G, A) = - f(G, A \cup \{e\}).
\end{align}
Then
\begin{align}
\sum_{A \subseteq E}{f(G, A)}
&= \sum_{\substack{A \subseteq E \\ \forall B \in \mathcal{B} \colon B \nsubseteq A}}{f(G, A)}.  \label{eq:theo_gbct_cond}
\end{align}
\end{theo}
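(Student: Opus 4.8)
The plan is to adapt the proof of Theorem~\ref{theo:broken-cycle_theorem} almost verbatim, replacing the explicit summand $(-1)^{\abs{A}} x^{k(\esg{G}{A})}$ by the abstract term $f(G, A)$ and the full broken-cycle set $\mathcal{B}(G, <)$ by the chosen subset $\mathcal{B}$. The engine of that proof is a pairing of edge subsets that makes every block except the broken-cycle-free one contribute nothing, and the only property of the summand that this pairing actually exploits is precisely the cancellation encoded in hypothesis~\eqref{eq:theo_gen_bct_cond}.

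Concretely, I would first fix the enumeration $e_1 < \cdots < e_{\abs{E}}$ and, for every $B \in \mathcal{B}$, keep the minimal closing edge $e(B)$ defined exactly as before. I then partition all $A \subseteq E$ into blocks $E_0, E_1, \ldots$ relative to $\mathcal{B}$: we put $A \in E_0$ if $A$ contains no member of $\mathcal{B}$, and otherwise $A \in E_i$ where $e_i = \min\{e(B) \mid B \in \mathcal{B},\ B \subseteq A\}$. The crucial pairing claim, namely that for $i > 0$ and $e_i \notin A$ one has $A \in E_i \Leftrightarrow A \cup \{e_i\} \in E_i$, carries over unchanged: its proof only uses that a broken cycle contained in $A$ is still contained in $A \cup \{e_i\}$, together with the fact that a closing edge exceeds every edge of its own broken cycle, and neither fact is altered by passing from $\mathcal{B}(G,<)$ to the subset $\mathcal{B}$.

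Next, for $A \in E_i$ with $e_i \notin A$, I would argue as in the proof of Theorem~\ref{theo:broken-cycle_theorem} that $e_i$ lies on a $\delta$-cycle inside $\esg{G}{A \cup \{e_i\}}$, so deleting it can neither merge nor split connected components and hence $k(\esg{G}{A}) = k(\esg{G}{A \cup \{e_i\}})$. Here the single point of departure from the original proof appears: instead of evaluating the two summands explicitly, I invoke~\eqref{eq:theo_gen_bct_cond} to conclude $f(G, A) = - f(G, A \cup \{e_i\})$. Thus the involution $A \mapsto A \cup \{e_i\}$ pairs the members of each block $E_i$ with $i > 0$ into two-element sets whose $f$-values are mutually inverse, so $\sum_{A \in E_i} f(G, A)$ is the neutral element of the group. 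Summing over all blocks leaves only the contribution of $E_0 = \{A \subseteq E \mid \forall B \in \mathcal{B}\colon B \nsubseteq A\}$, which is exactly~\eqref{eq:theo_gbct_cond}.

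The main obstacle, and really the only thing needing genuine care, is to confirm that weakening from the full broken-cycle set to an arbitrary subset $\mathcal{B}$ does not disturb the block structure or the pairing claim. This rests on observing that $e(B)$ is an intrinsic invariant of the individual broken cycle $B$, defined through $\delta$-cycles of $G$ and independent of which cycles we choose to forbid; consequently the strict inequalities that drive the two directions of the pairing claim survive when only the cycles in $\mathcal{B}$ are excluded. Once this is secured, the substitution of the concrete summand by $f$ is immediate, since~\eqref{eq:theo_gen_bct_cond} is nothing but the abstract form of the cancellation that annihilated the blocks in the original argument.
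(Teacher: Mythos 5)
Your argument breaks at the pairing claim, and the ``fact'' you cite to support it --- that a closing edge exceeds every edge of its own broken cycle --- is false for hypergraphs. Only the \emph{maximal} closing edge has this property (the edge deleted to create the broken cycle closes it and is larger than all of its edges); the \emph{minimal} closing edge $e(B)$, which is what your block partition is indexed by, can be smaller than edges of $B$, because $B$ may be completed to a \emph{different} $\delta$-cycle by a small edge. Concretely, let $V=\{1,2,3\}$ and $E=\{e_1,e_2,e_3,e_4\}$ with $e_1=\{1,2\}<e_2=\{2,3\}<e_3=\{1,3\}<e_4=\{1,2,3\}$. The $\delta$-cycles are $\{e_1,e_2,e_3\}$, $\{e_1,e_2,e_4\}$, $\{e_1,e_3,e_4\}$, $\{e_2,e_3,e_4\}$, so the broken cycles are $\{e_1,e_2\}$, $\{e_1,e_3\}$, $\{e_2,e_3\}$ with minimal closing edges $e_3$, $e_2$, $e_1$, respectively. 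Take $\mathcal{B}=\{\{e_1,e_2\},\{e_2,e_3\}\}$ (the same failure occurs for the full set $\mathcal{B}(G,<)$, so the problem has nothing to do with passing to a subset). Then $A=\{e_1,e_2\}$ lies in your block $E_3$, but $A\cup\{e_3\}=\{e_1,e_2,e_3\}$ contains $\{e_2,e_3\}$, whose minimal closing edge is $e_1<e_3$, so $A\cup\{e_3\}$ lies in $E_1$: the direction ($\Rightarrow$) of the pairing claim fails. In fact $E_3=\{\{e_1,e_2\},\{e_1,e_2,e_4\}\}$ contains no set including $e_3$ at all, so toggling $e_3$ pairs nothing inside $E_3$, and your conclusion that $\sum_{A\in E_i}f(G,A)$ vanishes is unsupported (here the two members of $E_3$ do cancel, but via $e_4$, a fact your argument never establishes).

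This is precisely why the paper's proof of this theorem does not reuse the block partition of Theorem~\ref{theo:broken-cycle_theorem} but argues differently: it defines $e(B)$ as the \emph{maximal} edge closing $B$, so that any broken cycle $B'$ containing an edge $g$ automatically satisfies $e(B')>g$; it inducts on $\abs{\mathcal{B}}$, at each step splitting off one broken cycle $B\in\mathcal{B}$ chosen extremally, namely with $e(B)\nless e(B')$ for all $B'\in\mathcal{B}'=\mathcal{B}\setminus\{B\}$; and it pairs only the sets $A$ that contain $B$ but no member of $\mathcal{B}'$. Under these choices toggling $e(B)$ is a genuine involution on that family: a forbidden $B'\subseteq A\cup\{e(B)\}$ with $B'\nsubseteq A$ would have to contain $e(B)$, hence would satisfy $e(B')>e(B)$, contradicting the extremal choice of $B$; the cancellation $f(G,A)=-f(G,A\cup\{e(B)\})$ then follows from $k(\esg{G}{A})=k(\esg{G}{A\cup\{e(B)\}})$ and hypothesis~\eqref{eq:theo_gen_bct_cond}, as you intend. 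So the repair is not the routine check described in your last paragraph; it requires replacing minimal by maximal closing edges and processing the broken cycles one at a time in a suitable order (or some equivalent device to control the interaction between broken cycles, which is exactly what your simultaneous partition fails to do). Note also that in a simple graph every broken cycle has a unique closing edge, which is why this issue is invisible in Whitney's classical setting and why the adaptation feels, misleadingly, as if it should carry over verbatim.
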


\begin{proof}
We use induction with respect to the cardinality of the set $\mathcal{B}$. For the basic step we assume that $\abs{\mathcal{B}} = 0$ and the statement holds obviously.

We assume as induction hypothesis that the statement holds for any set $\mathcal{B} \subseteq \mathcal{B}(G, <)$ with cardinality less than $k$ and consider now a set $\mathcal{B} \subseteq \mathcal{B}(G, <)$ with cardinality $k$.

For each broken cycle $B \in \mathcal{B}(G, <)$, we denote by $e(B)$ the maximal edge closing the broken cycle $B$, i.e.\ 
\begin{align*}
e(B) = \max{ \{e \in E \mid B \cup \{e\} \text{ is the edge set of a } \delta \text{-cycle in } G\} }.
\end{align*}
Let $B \in \mathcal{B}$ such that $\mathcal{B} = \mathcal{B}' \cup \{B\}$ and $e(B) \nless e(B')$ for all $B' \in \mathcal{B}'$.

In fact, we only have to show that the edge subsets that do include the broken cycle $B$, but do not include any broken cycle $B' \in \mathcal{B}'$, cancel each other. Let $\mathcal{A}$ be the set of such edge subsets, i.e.
\begin{align*}
\mathcal{A} = \bigcup_{\substack{A \subseteq E \\ \forall B' \in \mathcal{B}' \colon B' \nsubseteq A \\ B \subseteq A}}{\{A\}}.
\end{align*}
We claim that for each $A \in \mathcal{A}$ with $e(B) \notin A$ it holds
\begin{align*}
A \in \mathcal{A} \Leftrightarrow A \cup \{e(B)\} \in \mathcal{A}.
\end{align*}

Proof of the first direction ($\Rightarrow$): As $B \subseteq A$, obviously $B \subseteq A \cup \{e(B)\}$. Hence we have to show that there is no broken cycle $B' \in \mathcal{B}'$ with $B' \subseteq A \cup \{e(B)\}$. Assume there is such a broken cycle $B'$. Because $B' \nsubseteq A$, $e(B)$ must be an edge of $B'$, and consequently the maximal edge closing $B'$ must be greater than $e(B)$, $e(B) < e(B')$. This is a contradiction to the choice of $B$ such that $e(B) \nless e(B')$ for all $B' \in \mathcal{B}'$. Hence there is no such $B'$ and it follows $A \cup \{e(B)\} \in \mathcal{A}$.

Proof of the second direction ($\Leftarrow$): We have $A \cup \{e(B)\} \in \mathcal{A}$, i.e.\ $A \cup \{e(B)\}$ contains only the broken cycle $B$, which does not include $e(B)$ by definition. Therefore, $A$ contains the broken cycle $B$, but no other broken cycle, because otherwise this broken cycle would also be in $A \cup \{e(B)\}$. Consequently $A \in \mathcal{A}$.



Because $\abs{\mathcal{B}'} < k$ we can use the induction hypothesis and the statement follows by
\begin{align*}
\sum_{A \subseteq E}{f(G, A)}
&= \smashoperator[r]{\sum_{\substack{A \subseteq E \\ \forall B' \in \mathcal{B}' \colon B' \nsubseteq A}}}{f(G, A)} \\
&= \smashoperator[r]{\sum_{\substack{A \subseteq E \\ \forall B' \in \mathcal{B}' \colon B' \nsubseteq A \\ B \nsubseteq A}}}{f(G, A)} + \smashoperator[r]{\sum_{\substack{A \subseteq E \\ \forall B' \in \mathcal{B}' \colon B' \nsubseteq A \\ B \subseteq A}}}{f(G, A)} \\
&= \smashoperator[r]{\sum_{\substack{A \subseteq E \\ \forall B \in \mathcal{B} \colon B \nsubseteq A}}}{f(G, A)} + \smashoperator[r]{\sum_{\substack{A \subseteq E \\ \forall B' \in \mathcal{B}' \colon B' \nsubseteq A \\ B \subseteq A, e(B) \in A}}}{f(G, A)} + \smashoperator[r]{\sum_{\substack{A \subseteq E \\ \forall B' \in \mathcal{B}' \colon B' \nsubseteq A \\ B \subseteq A, e(B) \notin A}}}{f(G, A)} \\
&= \smashoperator[r]{\sum_{\substack{A \subseteq E \\ \forall B \in \mathcal{B} \colon B \nsubseteq A}}}{f(G, A)}. \qedhere
\end{align*}
\end{proof}

\printbibliography

\end{document}